\title[M. El Berrag  and  A. Tajmouati   ]
{   Ces\`{a}ro-Hypercyclic  and  Weyl type theorem  }\vspace{3cm}
\author[ M. El Berrag  and  A. Tajmouati ]
{\hspace{0.5cm} Abdelaziz Tajmouati  \hspace{0.5cm} Mohammed El Berrag }
\subjclass[2010]{Primary: 47A16;
Secondary: 46B37} \keywords{hypercyclic, ces$\grave{a}$ro-hypercyclic, a-Weyl's theorem, a-Browder's theorem.}
\date{}
\newtheorem{theorem}{\textbf{Theorem}}[section]
\newtheorem{definition}{\textbf{Definition}}[section]
\newtheorem{lemma}{\textbf{Lemma}}[section]
\newtheorem{proposition}{\textbf{Proposition}}[section]
\newtheorem{corollary}{\textbf{Corollairy}}[section]
\newtheorem{example}{\textbf{Example}}
\begin{document}
$      $
\maketitle
\centerline{Sidi Mohamed Ben Abdellah University}
\centerline{ Faculty of Sciences Dhar El Mahraz} \centerline {Fez,
Morocco}
\hspace{1cm}Email: abdelaziztajmouati@yahoo.fr\,\,\,\, med.elberrag@gmail.com\hspace{1cm}

\begin{abstract}
In this paper we  study the relations between Ces$\grave{a}$ro-hypercyclic
operators  and the operators for which Weyl type theorem holds.
\end{abstract}

\section{Introduction}

Throughout this note let $B(\mathcal{H})$ denote the algebra of bounded linear operators acting on a complex, separable,
infinite dimensional Hilbert space $\mathcal{H}.$ If
$T\in B(\mathcal{H}),$  write $N(T)$  and $R(T)$  for the null space and the
range of $T;$  $\sigma(T)$  for the spectrum of $T;$
$\pi_{00}(T) = \pi_{0}(T)\cap$ iso$\sigma(T),$
where  $\pi_{0}(T)= \{\lambda\in \mathbb{C}: 0<\dim N(T-\lambda I)<\infty \}$ are the eigenvalues of finite multiplicity. Let $p_{00}(T)$ denote  the set
of Riesz points of $T$ (i.e., the set of $\lambda\in \mathbb{C}$ such that $T- \lambda I$  is Fredholm of finite ascent and
descent \cite{a}). An operator $T\in B(\mathcal{H})$ is called upper semi-Fredholm if it has closed range with finite dimensional null space and if $R(T)$  has finite co-dimension, $T\in B(\mathcal{H})$ is
called a lower semi-Fredholm operator. We call $T\in B(\mathcal{H})$ Fredholm if it has closed range with
finite dimensional null space and
its range is of finite co-dimension. The index of a Fredholm operator
$T\in B(\mathcal{H})$ if given by
\begin{center}
 ind$(T)= \dim N(T) - \dim R(T)^{\bot}(= \dim N(T) - \dim N(T^{\ast})).$
\end{center}
An operator $T\in B(\mathcal{H})$ is called Weyl if it is Fredholm of index zero. And $T\in B(\mathcal{H})$ is called Browder if it is Fredholm  of finite ascent and descent: equivalently  \cite{h} if  $T$
is Fredholm and $T - \lambda I$ is invertible for sufficiently small $\lambda\neq0$ in $\mathbb{C}.$ The essential spectrum $\sigma_{e}(T),$  the Weyl spectrum $\sigma_{w}(T),$ the Browder spectrum $\sigma_{b}(T),$ the upper semi-Fredholm spectrum and the lower semi-Fredholm spectrum of $T\in B(\mathcal{H})$ are defined by
\begin{quote}
$\sigma_{e}(T) = \{\lambda\in \mathbb{C}: T-\lambda I$  is not Fredholm$\},$  \\
  $\sigma_{w}(T) = \{\lambda\in \mathbb{C}: T-\lambda I$ is not Weyl$\},$ \\
  $\sigma_{b}(T) = \{\lambda\in \mathbb{C}: T-\lambda I$ is not Browder$\},$\\
  $\sigma_{SF_{+}}(T) = \{\lambda\in \mathbb{C}: T-\lambda I$ is not upper semi-Fredholm$\},$  \\
  $\sigma_{SF_{-}}(T) = \{\lambda\in \mathbb{C}: T-\lambda I$ is not lower semi-Fredholm$\}.$
\end{quote}
In keeping with current usage \cite{a, hl}, we say that an operator $T\in B(\mathcal{H})$ satisfies
Browder's theorem (respectively Weyl's theorem) if
$\sigma(T)\backslash \sigma_{w}(T) = p_{00}(T),$
equivalently  $\sigma_{w}(T)= \sigma_{b}(T)$ (respectively $\sigma(T)\backslash
\sigma_{w}(T) = \pi_{00}(T)).$
The following implications hold \cite{hl}: Weyl's theorem for $T$  $\Rightarrow$
Browder's theorem for $T$  $\Rightarrow$  Browder's theorem for $T^{\ast}.$ Let
$\pi_{00}^{a}(T)$ denote the set of $\lambda \in \mathbb{C }$ such that $\lambda$
is an isolated point of $\sigma_{a}(T), \lambda \in$ iso$\sigma_{a}(T),$ and
$0<\dim N(T-\lambda I)<\infty,$  where $\sigma_{a}(T)$  denotes the approximate
point spectrum of the operator $T.$ Then $p_{00}(T)\subseteq \pi_{00}(T)
\subseteq \pi_{00}^{a}(T).$  $T$ is said to satisfy a-Weyl's theorem if
$\sigma_{a}(T)\backslash \sigma_{ea}(T) = \pi_{00}^{a}(T),$ where we write
$\sigma_{ea}(T)$ for the essential approximate point spectrum of $T$
(i.e., $\sigma_{ea}(T)= \bigcap\{\sigma_{a}(T+ K): K\in K(H)\}:$
a-Weyl's theorem for $T$ $\Rightarrow$  Weyl's theorem for $T,$
but the converse is generally false \cite{r1}. It is well known that $\sigma_{ea}(T)$ coincides with $\sigma_{ea}(T) = \{\lambda\in \mathbb{C}: T-\lambda I\not\in SF_{+}^{-}\},$ where  $SF_{+}^{-}(\mathcal{H})=\{T\in B(\mathcal{H}): T$ is upper semi-Fredholm of ind$(T)\leq0\}.$  We say that $T$ satisfies a-Browder's  if $\sigma_{ea}(T)= \sigma_{ab}(T),$ (equivalently, $\sigma_{a}(T)\backslash \sigma_{ea}(T) = p_{00}^{a}(T),$ where  $p_{00}^{a}(T)= \{\lambda\in$ iso$\sigma_{a}(T): \lambda \in p_{00}(T)\}$ \cite{r2} and $\sigma_{ab}(T)$ the Browder essential approximate point spectrum. Evidently, a-Browder's theorem
implies Browder's theorem (but the converse is generally false).\\
We turn to a variant of the essential approximate point spectrum. $T\in B(\mathcal{H})$ is called a generalized upper semi-Fredholm operator if there exists $T$-invariant subspaces $M$ and $N$ such that $\mathcal{H}= M\oplus N$ and $T_{|M}\in SF_{+}^{-}(M), T_{|N}$ is quasinilpotent. Clearly, if $T$ is generalized upper semi-Fredholm, there exists $\epsilon>0$  such that $T-\lambda I \in SF_{+}^{-}(\mathcal{H})$ and
$N(T-\lambda I)\subseteq \bigcap_{n=1}^{\infty}R[(T-\lambda I)^{n}]$  if $0<|\lambda|<\epsilon.$ Clearly, if  $\lambda \in$ iso$\sigma(T), T-\lambda I$ is generalized upper semi-Fredholm. The new spectrum set is defined as follows. Let
\begin{center}
 $\rho_{1}(T)= \{\lambda \in \mathbb{C}:$ there exists  $\epsilon>0$ such that $T-\mu I$  is generalized upper semi-Fredholm if  $0<|\mu - \lambda|<\epsilon \}$
 \end{center}
and let $\sigma_{1}(T)= \mathbb{C}\backslash \rho_{1}(T).$  Then
\begin{quote}
$\sigma_{1}(T)\subseteq \sigma_{ea}(T)\subseteq \sigma_{ab}(T)\subseteq \sigma_{a}(T).$
\end{quote}
$T$ is called approximate isoloid (a-isoloid) (or isoloid) if  $\lambda \in$ iso$\sigma_{a}(T)($iso$\sigma(T))\Rightarrow N(T-\lambda I)\neq\{0\}$ and $T$ is called finite approximate isoloid ($f$-a-isoloid) (or finite isoloid, $f$-isoloid)
operator if the isolated points of approximate point spectrum (of the spectrum) are all eigenvalues of finite multiplicity. Clearly, $f$-a-isoloid
implies a-isoloid and finite isoloid, but the converse is not true.\\
Recall that an operator $T\in B(\mathcal{H})$  has the single-valued extension property at a point $\lambda_{0}\in \mathbb{C},$  SVEP at $\lambda_{0}$ for short, if for every open disc $\mathcal{D}_{\lambda_{0}}$ centered at $\lambda_{0}$ the only analytic function  $f: \mathcal{D}_{\lambda_{0}} \rightarrow H$
satisfying $(T-\lambda I)f(\lambda)= 0$ is the function $f\equiv 0.$  $T$
has SVEP if it has SVEP at every point of $\mathbb{C}$
(= the complex plane). It is known \cite[Lemma 2.18]{d}
that a Banach space operator $T$  with SVEP satisfies
a-Browder's theorem. Our first observation is that for
operators $T\in CH,$  both $T$ and $T^{\ast}$ satisfy a-Browder's theorem.\\
A bounded linear operator $T : \mathcal{H} \rightarrow \mathcal{H}$
is called hypercyclic if there is some vector $x \in \mathcal{H}$  such that $Orb(T,x)
= \{T^{n}x: n\in \mathbb{N} \}$  is dense in $\mathcal{H}$,
where such a vector $x$ is said hypercyclic  for $T.$\\
The first example of hypercyclic operator was given by Rolewicz in \cite{r}. He
proved that if $B$ is a backward shift on the Banach space $l^{p}$, then $\lambda B$ is hypercyclic if and
only if $|\lambda|> 1.$\\
Let $\{e_{n}\}_{n\geq0}$ be the canonical basis
 of $l^{2}(\mathbb{N}).$ If $\{w_{n}\}_{n\in\geq1}$ is a bounded
sequence in $\mathbb{C}\backslash
\{0\},$ then the unilateral backward weighted
shift  $T:\,l^{2}(\mathbb{N})\longrightarrow\, l^{2}(\mathbb{N})$ is defined by $Te_{n}= w_{n}e_{n-1},\,\,\,\, n\geq1, Te_{0}=0,$ and let $\{e_{n}\}_{n\in \mathbb{Z}}$ be the canonical basis  of $l^{2}(\mathbb{Z}).$ If $\{w_{n}\}_{n\in\mathbb{Z}}$ is a bounded
sequence in $\mathbb{C}\backslash
\{0\},$ then the bilateral  weighted
shift  $T:\,l^{2}(\mathbb{Z})\longrightarrow\, l^{2}(\mathbb{Z})$ is defined by
$Te_{n}= w_{n}e_{n-1}.$\\
The definition and the properties of supercyclicity operators were
introduced by Hilden and Wallen \cite{hw}. They proved that all
unilateral backward weighted shifts on a Hilbert space are
supercyclic.\\
A bounded linear
operator $T\in B(\mathcal{H})$ is called
supercyclic if there is some vector $x \in \mathcal{H}$  such that the projective orbit
$\mathbb{C}.Orb(T,x) = \{\lambda T^{n}x: \lambda\in \mathbb{C}, n\in \mathbb{N} \}$ is dense in $X$. Such a vector $x$  is said supercyclic  for $T.$  Refer to
\cite{bm}\cite{kp}\cite{et}\cite{te} for more informations  about
hypercyclicity and supercyclicity.\\
In \cite{s1} and \cite{s}, Salas characterized the bilateral weighted shifts that are hypercyclic
and those that are supercyclic in terms of their weight sequence.
In \cite{f}, N. Feldman  gave a characterization of the invertible bilateral weighted
shifts that are hypercyclic or supercyclic.\\
For the following theorem, see \cite[Theorem 4.1]{f}.
\begin{theorem}\label{A1} Suppose that $T:\,l^{2}(\mathbb{Z})\longrightarrow\, l^{2}(\mathbb{Z})$ is a bilateral weighted shift with
weight sequence $(w_{n})_{n\in \mathbb{Z}}$  and either $w_{n}\geq m>0$  for all $n<0$ or $w_{n}\leq m$  for all $n>0.$ Then:
\begin{enumerate}
  \item  $T$ is hypercyclic if and only if there exists a sequence of integers $n_{k} \rightarrow \infty$ such
that $\lim_{k\rightarrow \infty}\prod_{j=1}^{n_{k}}w_{j}= 0$ and $\lim_{k\rightarrow \infty}\prod_{j=1}^{n_{k}}\frac{1}{w_{-j}}= 0.$
  \item  $T$ is supercyclic if and only if there exists a sequence of integers $n_{k} \rightarrow \infty$ such
that $\lim_{k\rightarrow \infty}(\prod_{j=1}^{n_{k}}w_{j})(\prod_{j=1}^{n_{k}}\frac{1}{w_{-j}})= 0.$

\end{enumerate}
\end{theorem}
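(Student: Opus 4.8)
The plan is to prove the ``only if'' implications by a direct inspection of the coordinates of the iterates of $T$ on a (hyper- or super-)cyclic vector, and the ``if'' implications by the Hypercyclicity Criterion, respectively the Supercyclicity Criterion (in the form of, e.g., \cite{bm}). Throughout, let $\mathcal{D}$ be the dense linear span of $\{e_n\}_{n\in\mathbb{Z}}$ and let $S$ be the formal inverse weighted shift of $T$, i.e.\ the linear map of $\mathcal{D}$ into itself determined by $TS=I$ on $\mathcal{D}$ (it sends each basis vector to the reciprocal of a weight times an adjacent basis vector). The elementary but crucial observation is that for fixed $i\in\mathbb{Z}$ the vector $T^{n}e_i$ is a scalar times a single basis vector, the modulus of the scalar being a product of $n$ consecutive weights, and likewise $S^{n}e_i$ is a scalar times a basis vector whose modulus is a product of $n$ reciprocals of consecutive weights; for $i=0$ these are $\prod_{j=1}^{n}w_j$ and $\prod_{j=1}^{n}\frac{1}{w_{-j}}$, up to a factor involving one weight at the moving endpoint of the length-$n$ window, while for general $i$ they differ from the $i=0$ values by a correction factor built from finitely many weights near the endpoints of the window.

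For necessity, I would assume $x=\sum_n x_n e_n$ is hypercyclic for $T$; since a dense orbit cannot be achieved with boundedly many iterates, there is $n_k\to\infty$ with $T^{n_k}x\to e_0$. The coordinate of $T^{n_k}x$ at index $n_k$ equals $x_0$ times a product of $n_k$ consecutive weights and must tend to $0$ (the $n_k$-th coordinate of $e_0$ being $0$), so $\prod_{j=1}^{n_k}w_j\to 0$ once we assume $x_0\neq 0$ (permissible, as the hypercyclic vectors with nonzero $0$-th coordinate are dense). The coordinate of $T^{n_k}x$ at index $0$ equals $x_{-n_k}$ times a product of $n_k$ consecutive weights and must tend to $1$; since $x\in l^2(\mathbb{Z})$ forces $x_{-n_k}\to 0$, that product tends to $\infty$, and the one-sided boundedness hypothesis ($w_n\ge m>0$ for $n<0$, or $w_n\le m$ for $n>0$) is exactly what allows one to absorb the single moving-endpoint weight and conclude $\prod_{j=1}^{n_k}\frac{1}{w_{-j}}\to 0$. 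If instead $x$ is supercyclic, I would choose $\lambda_k\in\mathbb{C}$ and $n_k\to\infty$ with $\lambda_k T^{n_k}x\to e_0$ and form the ratio of the coordinates at indices $n_k$ and $0$: the scalars $\lambda_k$ cancel, $x_{-n_k}\to 0$, and one reads off $(\prod_{j=1}^{n_k}w_j)(\prod_{j=1}^{n_k}\frac{1}{w_{-j}})\to 0$.

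For sufficiency, given $n_k\to\infty$ realising the limit(s) in the statement (passing to a subsequence if convenient), I would apply the Hypercyclicity Criterion, respectively the Supercyclicity Criterion, along $(n_k)$ with $\mathcal{D}_1=\mathcal{D}_2=\mathcal{D}$ and $S_{n_k}:=S^{n_k}$. The identity $T^{n_k}S^{n_k}=I$ on $\mathcal{D}$ is immediate. What remains is to verify $T^{n_k}e_i\to 0$ and $S^{n_k}e_i\to 0$ for every $i$ (for (1)), respectively $\|T^{n_k}e_i\|\,\|S^{n_k}e_i\|\to 0$ for every $i$ (for (2)); by the opening observation each of these quantities is the corresponding $i=0$ quantity times a correction factor made of finitely many weights at window endpoints, and it is precisely the one-sided boundedness hypothesis that keeps these correction factors bounded uniformly in $k$, so the $i=0$ hypotheses propagate to all $i$. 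Hence the criteria apply and $T$ is hypercyclic, respectively supercyclic.

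I expect the one genuinely delicate step to be this bookkeeping of endpoint weight factors, carried out simultaneously for all basis vectors $e_i$ and under each of the two alternatives in the hypothesis: without the assumption ``$w_n\ge m>0$ for all $n<0$ or $w_n\le m$ for all $n>0$'' the clean product conditions in (1) and (2) need not be equivalent, uniformly over $i$, to the operator-theoretic requirements of the two criteria, and the equivalences can fail.
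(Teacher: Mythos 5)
You should first note that the paper does not prove this statement at all: it is quoted verbatim (as Theorem \ref{A1}) from Feldman \cite[Theorem 4.1]{f}, so there is no in-paper proof to compare with; your outline has to be judged against the standard Salas--Feldman argument, whose strategy (coordinate inspection for necessity, Hypercyclicity/Supercyclicity Criterion along $(n_k)$ with $S_{n_k}=S^{n_k}$ for sufficiency) you have correctly identified. The necessity half of your sketch is essentially sound: the endpoint discrepancies there are harmless because the extra weight at the moving endpoint is bounded above by $\|T\|$, and the ratio trick for supercyclicity works. (A side remark: your coordinate formulas use the convention $Te_n=w_ne_{n+1}$, while the paper's introduction defines $Te_n=w_ne_{n-1}$; the products in the statement only match the forward convention, so you have silently, and reasonably, adopted Feldman's convention -- this mismatch is the paper's, not yours, but it should be flagged.)

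The genuine gap is in the sufficiency half, at exactly the step you defer. Your justification -- ``the one-sided boundedness hypothesis keeps the correction factors bounded uniformly in $k$, so the $i=0$ hypotheses propagate to all $i$'' -- is not correct as stated. For instance (forward convention, $i\le 0$), $\|T^{n_k}e_i\|$ equals a fixed constant times $\bigl(\prod_{j=1}^{n_k}w_j\bigr)\big/\prod_{l=n_k+i}^{n_k}w_l$: the correction is a \emph{division} by finitely many weights at the moving endpoint, and neither alternative of the hypothesis (negative weights bounded below, or positive weights bounded above) gives a lower bound on $w_{n_k+i},\dots,w_{n_k}$, so this factor need not be bounded in $k$; a symmetric problem occurs for $\|S^{n_k}e_i\|$ under the second alternative. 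Closing this is the actual content of Feldman's theorem: one must modify the sequence, e.g.\ replace $n_k$ by $n_k\pm q$ (shifting one way under the alternative $w_n\ge m>0$ for $n<0$, the other way under $w_n\le m$ for $n>0$) and check that the shift preserves both limit conditions -- which uses the hypothesis on one side and the boundedness of $T$ on the other -- or equivalently show that the $q=0$ conditions imply Salas's conditions for every $q$. Without carrying out this re-indexing argument (and doing it separately for the two alternatives, and for the supercyclicity product in part (2)), the criterion cannot be invoked, so the proof is incomplete precisely at its only nontrivial point.
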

Let $\mathcal{M}_{n}(T)$  denote the arithmetic mean of
the powers of $T\in B(\mathcal{H})$, that is $$ \mathcal{M}_{n}(T)= \frac{1+ T+ T^{2}+ ... T^{n-1}}{n}, n\in \mathbb{N}^{\ast}.$$
If the arithmetic
means of the orbit of $x$ are dense in $\mathcal{H}$ then the operator $T$  is said to be Ces$\grave{a}$ro-hypercyclic. In \cite{l}, Fernando Le$\acute{o}$n-Saavedra proved that an operator is Ces$\grave{a}$ro-hypercyclic if and only if there exists a vector $x\in \mathcal{H }$ such that the orbit $\{n^{-1}T^{n}x\}_{n\geq1}$
is dense in $\mathcal{H}$ and characterized the bilateral weighted
shifts that are Ces$\grave{a}$ro-hypercyclic.\\
For the following proposition, see \cite[Proposition 3.4]{l}.
\begin{proposition}\label{A2} Let  $T:\,l^{2}(\mathbb{Z})\longrightarrow\, l^{2}(\mathbb{Z})$ be a bilateral weighted shift with
weight sequence $(w_{n})_{n\in \mathbb{Z}}.$ Then $T$ is Ces$\grave{a}$ro-hypercyclic if and only if there exists an
increasing sequence $n_{k}$ of positive integers such that for any integer $q,$ \begin{center}
  $\lim_{k\rightarrow \infty}\prod_{i=1}^{n_{k}}\frac{w_{i+q}}{n_{k}}= \infty$ and\, $\lim_{k\rightarrow \infty}\prod_{i=0}^{n_{k}-1}\frac{w_{q-i}}{n_{k}}= 0.$
\end{center}
\end{proposition}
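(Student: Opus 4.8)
The plan is to combine Le\'on-Saavedra's reduction (recalled above) with the Hypercyclicity Criterion for the ``if'' part and a direct approximation argument for the ``only if'' part. By \cite{l}, $T$ is Ces\`aro-hypercyclic if and only if there is a vector $x\in l^{2}(\mathbb{Z})$ for which $\{n^{-1}T^{n}x\}_{n\geq1}$ is dense. Since $T^{n}e_{j}=\big(\prod_{m=0}^{n-1}w_{j-m}\big)e_{j-n}$, writing $x=\sum_{j}x_{j}e_{j}$ one computes that the $k$-th coordinate of $n^{-1}T^{n}x$ equals $\frac{1}{n}x_{k+n}\prod_{i=k+1}^{k+n}w_{i}$, where $\prod_{i=k+1}^{k+n}w_{i}=\prod_{i=1}^{n}w_{k+i}$ and $\prod_{m=0}^{n-1}w_{j-m}=\prod_{i=0}^{n-1}w_{j-i}$; I read a product tending to $\infty$ as its modulus tending to $\infty$. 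Both implications will be extracted from this coordinate formula.

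For the ``if'' direction, assume the two limits hold along an increasing sequence $(n_{k})$ of positive integers. On the dense subspace $c_{00}$ of finitely supported vectors set $T_{n}=n^{-1}T^{n}$ and $S_{n_{k}}e_{j}=n_{k}\big(\prod_{i=j+1}^{j+n_{k}}w_{i}\big)^{-1}e_{j+n_{k}}$, extended by linearity. A direct computation gives $T_{n_{k}}S_{n_{k}}e_{j}=e_{j}$ for every $j$; moreover $\|T_{n_{k}}e_{j}\|=\frac{1}{n_{k}}\big|\prod_{i=0}^{n_{k}-1}w_{j-i}\big|\to0$ by the second hypothesis (with $q=j$), and $\|S_{n_{k}}e_{j}\|=\big(\frac{1}{n_{k}}\big|\prod_{i=1}^{n_{k}}w_{j+i}\big|\big)^{-1}\to0$ by the first hypothesis (with $q=j$). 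By the Hypercyclicity Criterion in its sequential form (see e.g. \cite{bm}), $(T_{n})_{n}$ admits a dense $G_{\delta}$ of universal vectors; any such $x$ has $\{n^{-1}T^{n}x\}_{n\geq1}$ dense, so $T$ is Ces\`aro-hypercyclic.

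For the ``only if'' direction, suppose $T$ is Ces\`aro-hypercyclic. The set of $x$ with $\{n^{-1}T^{n}x\}$ dense is a dense $G_{\delta}$ (as usual it is a $G_{\delta}$, and it contains the dense orbit $\{n^{-1}T^{n}x\}$ since each $n^{-1}T^{n}x$ is again universal for $(n^{-1}T^{n})_{n}$, the scalars $\tfrac1n+\tfrac1m$ staying bounded); intersecting it with the comeager set $\bigcap_{q\in\mathbb{Z}}\{x:x_{q}\neq0\}$ and invoking Baire's theorem, fix such an $x$ with $x_{q}\neq0$ for all $q$. Fix $Q\in\mathbb{N}$ and $\epsilon\in(0,1)$, put $v=\sum_{|k|\leq Q}e_{k}$, and choose $\delta$ with $0<\delta<\frac{1}{2}$ and $\delta<\epsilon\min_{|q|\leq Q}|x_{q}|$. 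Since $\mathcal{H}$ has no isolated point the orbit stays dense after deleting finitely many terms, so there are arbitrarily large $n$ with $\|n^{-1}T^{n}x-v\|<\delta$; fix one also satisfying $n>2Q$ and $|x_{j+n}|<\epsilon/2$ for all $|j|\leq Q$ (possible since $x\in l^{2}$). Comparing the $j$-th coordinates for $|j|\leq Q$ gives $\frac{1}{n}|x_{j+n}|\big|\prod_{i=1}^{n}w_{j+i}\big|\geq1-\delta\geq\frac{1}{2}$, hence $\frac{1}{n}\big|\prod_{i=1}^{n}w_{j+i}\big|\geq\frac{1}{\epsilon}$; and since $n>2Q$ the index $j-n$ lies outside $\{-Q,\dots,Q\}$, so the $(j-n)$-th coordinate gives $\frac{1}{n}|x_{j}|\big|\prod_{i=0}^{n-1}w_{j-i}\big|<\delta$, hence $\frac{1}{n}\big|\prod_{i=0}^{n-1}w_{j-i}\big|<\epsilon$. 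Thus for every $Q$ and $\epsilon$ there are arbitrarily large $n$ realizing both estimates for all $|j|\leq Q$; applying this with $Q=k$, $\epsilon=1/k$ and choosing the resulting $n_{k}$ strictly increasing yields a single sequence with, for each fixed $q$, $\frac{1}{n_{k}}\big|\prod_{i=1}^{n_{k}}w_{i+q}\big|\geq k\to\infty$ and $\frac{1}{n_{k}}\big|\prod_{i=0}^{n_{k}-1}w_{q-i}\big|\leq1/k\to0$, as required.

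The main obstacle is the ``only if'' direction: extracting the two one-sided product estimates, uniformly in the window $|j|\leq Q$, from a single norm approximation $\|n^{-1}T^{n}x-v\|<\delta$. This dictates the choices above --- a block target $v$ with unit mass on $\{-Q,\dots,Q\}$ to force the forward products up, the far-left coordinates (available once $n>2Q$) to force the backward products down, and, crucially, a Ces\`aro-hypercyclic vector with no vanishing coordinate (obtained via Baire category), so that dividing by $x_{j}$ and by $x_{j+n}$ is legitimate and the decay $x_{j+n}\to0$ forced by $x\in l^{2}$ can be exploited. Everything else --- the coordinate computations and the diagonalization over $Q$ and $\epsilon$ --- is routine.
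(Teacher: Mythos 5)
Your proposal is correct, but note that the paper itself offers no proof of Proposition \ref{A2}: it is quoted verbatim from Le\'on-Saavedra \cite[Proposition 3.4]{l}, so there is no internal argument to compare against. What you have written is a self-contained Salas-style proof: sufficiency via the universality criterion applied to the sequence $T_{n}=n^{-1}T^{n}$, with explicit right inverses $S_{n_{k}}$ on $c_{00}$ so that $T_{n_{k}}S_{n_{k}}=I$ there, and necessity by comparing coordinates of $n^{-1}T^{n}x$ with the block target $v=\sum_{|k|\leq Q}e_{k}$, reading the forward products off the window $|j|\leq Q$ and the backward products off the far-left coordinates $j-n$. This is essentially the same circle of ideas as in \cite{l} (and as in Salas' characterizations), so you lose nothing by not having seen that source; your reading of the displayed condition as $\frac{1}{n_{k}}\prod w$ rather than $\prod\frac{w}{n_{k}}$ is also the one the paper itself uses in its examples. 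One step deserves a slightly fuller justification: the claim that each $n^{-1}T^{n}x$ is again universal for $(m^{-1}T^{m})_{m}$ needs more than the boundedness of the scalars $\frac1n+\frac1m$ --- what makes it work is that these scalars converge to the nonzero constant $\frac1n$ while the tails $\{m^{-1}T^{m}x\}_{m\geq M}$ remain dense, so a target $v$ is approximated by choosing $m$ large with $m^{-1}T^{m}x$ near $nv$; alternatively you could simply invoke the fact (proved in \cite{l}) that the set of Ces\`{a}ro-hypercyclic vectors is a dense $G_{\delta}$, after which your Baire-category selection of a universal vector with all coordinates nonzero, and the rest of the necessity argument, go through exactly as written.
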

Hypercyclic and supercyclic (Hilbert space) operators satisfying a Browder-Weyl
type theorem have recently been considered by
Cao \cite{c}. In \cite{d1} B.P. Duggal gave the necessary and sufficient conditions
for hypercyclic and supercyclic operators to satisfy a-Weyl's theorem.\\
In this paper we will give an example of a hypercyclic and supercyclic
operator which is not Ces$\grave{a}$ro-hypercyclic and vice versa. Furthermore, we study the relations between Ces$\grave{a}$ro-hypercyclic
operators  and the operators for which Weyl type theorem holds.
\section{  Main results  }
Suppose $\{n^{-1}T^{n}: n\geq1 \}$ is a sequence of bounded linear operators
on $\mathcal{H}$
\begin{definition} An operator $T\in B(\mathcal{H})$  is Ces$\grave{a}$ro-hypercyclic if and only if there exists a vector $x\in \mathcal{H }$ such that the orbit $\{n^{-1}T^{n}x\}_{n\geq1}$  is dense in $\mathcal{H}$
\end{definition}
The following example gives an operator which is
Ces$\grave{a}$ro-hypercyclic but not hypercyclic.
\begin{example}\cite{l} Let $T$ the bilateral backward shift with the weight sequence
$$w_{n}=\left\lbrace
\begin{array}{ll}
1 & \mbox{if $n\leq0,$ }\\
2 & \mbox{if $n\geq1.$}
\end{array}
\right.$$
Then $T$ is not hypercyclic, but it is Ces$\grave{a}$ro-hypercyclic.
\end{example}
Now, we will give an example of a hypercyclic and supercyclic
operator which is not Ces$\grave{a}$ro-hypercyclic.
\begin{example} Let $T$ the bilateral backward shift with the weight sequence
$$w_{n}=\left\lbrace
\begin{array}{ll}
2 & \mbox{if $n<0,$ }\\
\frac{1}{2} & \mbox{if $n\geq0.$}
\end{array}
\right.$$
Then $T$ is not Ces$\grave{a}$ro-hypercyclic, but it is hypercyclic and supercyclic.
\end{example}
\begin{proof} By applying Theorem \ref{A1} and taking $n_{k}=n,$ we have
$$\lim_{n\rightarrow \infty}\prod_{j=1}^{n}w_{j}= \lim_{n\rightarrow \infty}\frac{1}{2^{n}}= 0;$$
and $$\lim_{n\rightarrow \infty}\prod_{j=1}^{n}\frac{1}{w_{-j}}= \lim_{n\rightarrow \infty}\frac{1}{2^{n}} = 0.$$
Furthermore, we have $$\lim_{n\rightarrow \infty}(\prod_{j=1}^{n}w_{j})(\prod_{j=1}^{n}\frac{1}{w_{-j}})= \lim_{n\rightarrow \infty}(\frac{1}{2^{n}})(\frac{1}{2^{n}})=0.$$
Therefore by Theorem \ref{A1} the operator $T$  is hypercyclic and supercyclic. However, for all increasing sequence $n_{k}=n$ of
positive integers and taking $q=0$, we have

$$\lim_{n\rightarrow \infty}\prod_{i=1}^{n}\frac{w_{i+q}}{n}= \lim_{n\rightarrow \infty}\frac{1}{n2^{n}}=0,$$
from Proposition \ref{A2}, $T$  is not Ces$\grave{a}$ro-hypercyclic.
\end{proof}
The following example gives us an operator which is Ces$\grave{a}$ro-hypercyclic but not hypercyclic and supercyclic.
\begin{example} Let $T$ the bilateral backward shift with the weight sequence
$$w_{n}=\left\lbrace
\begin{array}{ll}
\frac{1}{2} & \mbox{if $n<0,$ }\\
n+1 & \mbox{if $n\geq0.$}
\end{array}
\right.$$
Then $T$ is Ces$\grave{a}$ro-hypercyclic, but it is not  hypercyclic and supercyclic.
\end{example}
\begin{proof} By applying Proposition \ref{A2} and taking $n_{k}=n$ and $q=0,$ we have
$$\lim_{n\rightarrow \infty}\prod_{i=1}^{n}\frac{w_{i+q}}{n}= \lim_{n\rightarrow \infty}\frac{(n+1)!}{n}=\infty,$$ and
$$\lim_{n\rightarrow \infty}\prod_{i=0}^{n}\frac{w_{q-i}}{n}= \lim_{n\rightarrow \infty}\frac{1}{n2^{n}}=0.$$
Therefore by Proposition \ref{A2} the operator $T$  is Ces$\grave{a}$ro-hypercyclic. On the other hand, we have
$$\lim_{n\rightarrow \infty}\prod_{j=1}^{n}w_{j}= \lim_{n\rightarrow \infty}((n+1)!)= \infty;$$ and $$\lim_{n\rightarrow \infty}(\prod_{j=1}^{n}w_{j})(\prod_{j=1}^{n}\frac{1}{w_{-j}})= \lim_{n\rightarrow \infty}((n+1)!)(2^{n})=\infty.$$
Therefore by Theorem \ref{A1} the operator $T$  is not  hypercyclic and supercyclic.

\end{proof}
We denote by $CH(\mathcal{H})$  the set
of all ces$\grave{a}$ro-hypercyclic  operator  in $B(\mathcal{H})$ and $\overline{CH(\mathcal{H})}$  the norm-closure of the class $CH(\mathcal{H})$. The following lemma \cite[Theorem 5.1]{l} give  the essential facts for hypercyclic operators and supercyclic operators that we will need to prove the main theorem.


\begin{lemma}\label{B2} $\overline{CH(\mathcal{H})}$ is the class of all those operators $T\in B(\mathcal{H})$ satisfying the conditions:
\begin{enumerate}
  \item $\sigma_{w}(T)\cup  \partial D$ is connected;
  \item $\sigma(T)\backslash\sigma_{b}(T)=\emptyset;$
  \item ind$(T- \lambda I)\geq0$ for every $\lambda\in \rho_{SF}(T),$ where $\rho_{SF}(T)= \{\lambda \in \mathbb{C}: T- \lambda I$ is semi-Fredholm $\}.$
\end{enumerate}

\end{lemma}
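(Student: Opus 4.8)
The statement in question, Lemma \ref{B2}, is attributed to Le\'on-Saavedra's \cite[Theorem 5.1]{l}, so the natural route is to reduce the characterization of $\overline{CH(\mathcal{H})}$ to the already-known structure of closures of hypercyclic and supercyclic classes, together with the first-return characterization recalled just before the lemma (namely that $T$ is Ces\`aro-hypercyclic iff $\{n^{-1}T^n x\}$ is dense for some $x$). The plan is to first establish the two one-sided inclusions separately: that every $T\in\overline{CH(\mathcal{H})}$ satisfies (1)--(3), and conversely that any $T$ meeting (1)--(3) lies in the norm closure of $CH(\mathcal{H})$.

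For the forward inclusion I would argue by a spectral-continuity / Herrero-type semicontinuity argument. The key point is that the three conditions (1)--(3) are \emph{stable under norm limits} in the appropriate semicontinuous sense: the Weyl spectrum, the Browder spectrum, the semi-Fredholm domain and the index are all upper/lower semicontinuous functions of the operator, and the conditions "$\sigma_w(T)\cup\partial D$ connected", "$\sigma(T)=\sigma_b(T)$", and "$\operatorname{ind}(T-\lambda I)\ge 0$ on $\rho_{SF}(T)$" survive passage to a limit. So it suffices to show each $T\in CH(\mathcal{H})$ itself satisfies (1)--(3). For this I would use that Ces\`aro-hypercyclicity of $T$ forces, for each $\lambda$, that $T-\lambda I$ has dense range (otherwise project the orbit onto the orthocomplement of the range and contradict density), which kills the holes of $\sigma(T)$ relative to $\sigma_b(T)$ and pins down the index sign exactly as in the hypercyclic case; the connectedness condition (1) follows from the fact that a Ces\`aro-hypercyclic operator cannot be written as a nontrivial Riesz-type direct sum, so $\sigma_w(T)$ together with the unit circle $\partial D$ (the scaling circle coming from the $n^{-1}$ normalization) cannot be disconnected.

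For the reverse inclusion the strategy is constructive: given $T$ satisfying (1)--(3), I would produce, for every $\varepsilon>0$, a Ces\`aro-hypercyclic operator $S$ with $\|T-S\|<\varepsilon$. Here I would lean on the known description of $\overline{HC(\mathcal{H})}$ (closure of the hypercyclic operators) and $\overline{SC(\mathcal{H})}$ due to Herrero/Bourdon and on the fact, used in the examples above via Theorem \ref{A1} and Proposition \ref{A2}, that weighted-shift models realize all the admissible spectral pictures. Concretely, conditions (2) and (3) say that modulo a compact (indeed Riesz) perturbation the spectral picture of $T$ is that of a Ces\`aro-hypercyclic operator: $\sigma(T)$ has no isolated Riesz points and the index is nonnegative on the semi-Fredholm region, while (1) provides the connectivity that lets one approximate $T$ by an operator unitarily equivalent to a direct summand build from a suitable bilateral weighted shift whose weights are chosen (as in Proposition \ref{A2}, letting the forward products over $n_k$ dominate $n_k$ and the backward products be $o(n_k)$) to be Ces\`aro-hypercyclic. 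Patching this model summand into the Apostol--Morrel--Herrero triangular form of $T$ yields the required $S$.

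The main obstacle, I expect, is the reverse inclusion --- specifically, verifying that the standard closure-of-hypercyclic machinery can be upgraded to the Ces\`aro setting without loss: one must check that the extra room afforded by the $n^{-1}$ normalization (which is exactly what makes Example 2.2 work while the unnormalized orbit fails) is compatible with the spectral constraints (1)--(3) and does not force any additional condition beyond them. Equivalently, the delicate step is to show that conditions (1)--(3) are not merely \emph{necessary} but already \emph{sufficient}, i.e.\ that there is no hidden fourth obstruction; this is where one genuinely needs Le\'on-Saavedra's analysis of which weighted shifts are Ces\`aro-hypercyclic (Proposition \ref{A2}) combined with a density/approximation argument in the BDF- or Herrero-style framework. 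The forward inclusion, by contrast, is mostly a matter of assembling semicontinuity of the relevant spectra plus the dense-range observation, and should be routine.
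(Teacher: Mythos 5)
The paper does not prove this lemma at all: it is quoted verbatim as \cite[Theorem 5.1]{l} (Le\'on-Saavedra), so there is no in-paper argument to compare yours against. Judged on its own, your sketch correctly identifies the strategy that the cited source (building on Herrero's work on limits of hypercyclic operators) actually follows --- show the class defined by (1)--(3) is norm-closed and contains $CH(\mathcal{H})$, then approximate any operator with that spectral picture by Ces\`aro-hypercyclic ones via Apostol--Morrel/Herrero-type models with weighted-shift summands tuned by Proposition \ref{A2}. But as a proof it has genuine gaps. For the necessity of (2) and (3) the clean route is the observation (used later in the paper) that $T\in CH(\mathcal{H})$ forces $\sigma_p(T^{\ast})=\emptyset$, which kills Riesz points and negative index at once; your ``dense range'' remark is essentially this, but you should say it at the level of $T^{\ast}$ having no eigenvalues. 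The necessity of (1) is not established by your one-line claim that a Ces\`aro-hypercyclic operator admits no nontrivial Riesz-type direct sum: a Riesz decomposition separates pieces of $\sigma(T)$, whereas condition (1) concerns $\sigma_w(T)\cup\partial D$, and passing from disconnectedness of $\sigma_w(T)\cup\partial D$ to a genuine direct-sum decomposition of $T$ (or of a small perturbation of $T$) requires the Apostol--Fialkow--Herrero--Voiculescu spectral-picture machinery, not just the Riesz functional calculus. Also, the parenthetical that $\partial D$ is a ``scaling circle coming from the $n^{-1}$ normalization'' is off: the unit circle enters exactly as in the hypercyclic case, because $n^{-1}$ grows only polynomially, so a summand with spectrum strictly inside (resp.\ outside) the unit circle makes $n^{-1}T^n x$ tend to $0$ (resp.\ to infinity in norm) on that summand, contradicting density.

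The second, larger gap is the sufficiency direction, which you yourself flag but do not close. Saying that weighted-shift models ``realize all admissible spectral pictures'' and can be ``patched into the triangular form of $T$'' is a plan, not an argument: the entire content of Le\'on-Saavedra's Theorem 5.1 is the verification that the Herrero-style approximation scheme can be run so that the approximants are Ces\`aro-hypercyclic (not merely hypercyclic), and that no fourth spectral obstruction appears. Likewise, the closedness of the class defined by (1)--(3) is true but needs the known stability theorems for semi-Fredholm index and for the Weyl/Browder spectra under small perturbations, not just a generic appeal to semicontinuity. In short: your outline points at the right literature and the right architecture, but every step that carries real weight --- necessity of (1), closedness of the spectral-picture class, and the constructive approximation --- is deferred rather than proved, so as it stands this is a reading guide to \cite[Theorem 5.1]{l} rather than a proof of it.
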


Let $H(T)$ be the class of complex-valued functions which are analytic in a neighborhood of $\sigma(T)$  and are not constant on any neighborhood of any component of $\sigma(T)$. Our results are:\\
\begin{theorem} If $T\in B(\mathcal{H})$  is $f$-isoloid and the Weyl's theorem holds for $T$ (or $T$ is $f$-a-isoloid
and the a-Weyl's theorem holds for $T$ ), then  $T\in \overline{CH(\mathcal{H})} \Leftrightarrow \sigma(T)= \sigma_{1}(T)$ and $\sigma(T)\cup \partial D$ is connected

\end{theorem}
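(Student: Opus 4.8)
The plan is to use Lemma \ref{B2} as the bridge: it already characterizes $\overline{CH(\mathcal{H})}$ by the three conditions (1) $\sigma_{w}(T)\cup\partial D$ connected, (2) $\sigma(T)\setminus\sigma_{b}(T)=\emptyset$, and (3) $\mathrm{ind}(T-\lambda I)\geq 0$ on $\rho_{SF}(T)$. So the whole task reduces to showing that, \emph{under the extra hypotheses} ($f$-isoloid $+$ Weyl, or $f$-a-isoloid $+$ a-Weyl), the conjunction of (1)--(3) is equivalent to the single pair of conditions ``$\sigma(T)=\sigma_{1}(T)$ and $\sigma(T)\cup\partial D$ connected.'' I would treat the two parenthetical cases in parallel, since a-Weyl's theorem implies Weyl's theorem and $f$-a-isoloid implies $f$-isoloid; the a-version is the one that genuinely interacts with $\sigma_{1}$, because $\sigma_{1}(T)\subseteq\sigma_{ea}(T)\subseteq\sigma_{ab}(T)\subseteq\sigma_{a}(T)$ from the excerpt.

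First, the ($\Leftarrow$) direction. Assume $\sigma(T)=\sigma_{1}(T)$ and $\sigma(T)\cup\partial D$ connected. Since $\sigma_{1}(T)\subseteq\sigma_{w}(T)\subseteq\sigma(T)$, the hypothesis $\sigma(T)=\sigma_{1}(T)$ forces $\sigma_{w}(T)=\sigma(T)$, hence $\sigma_{w}(T)\cup\partial D=\sigma(T)\cup\partial D$ is connected, giving (1). For (2): $\sigma(T)\setminus\sigma_{b}(T)\subseteq\sigma(T)\setminus\sigma_{w}(T)=\emptyset$ by Weyl's theorem we would need $\pi_{00}(T)=\emptyset$; here I would argue that if $\lambda\in\sigma(T)\setminus\sigma_{w}(T)$ then by Weyl's theorem $\lambda\in\pi_{00}(T)\subseteq\mathrm{iso}\,\sigma(T)$, so $\lambda\notin\sigma_{1}(T)$ (isolated points of the spectrum are not in $\sigma_1$ by the remark that $\lambda\in\mathrm{iso}\,\sigma(T)\Rightarrow T-\lambda I$ generalized upper semi-Fredholm, and in fact $\lambda\notin\sigma_1(T)$), contradicting $\sigma(T)=\sigma_{1}(T)$; hence $\sigma(T)\setminus\sigma_{w}(T)=\emptyset$ and a fortiori (2). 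Finally (3): since $\sigma_{1}(T)=\sigma(T)$, if $\lambda\in\rho_{SF}(T)$ with $\mathrm{ind}(T-\lambda I)<0$, then $T-\lambda I$ is upper semi-Fredholm of negative index in a punctured neighborhood, so it is generalized upper semi-Fredholm near $\lambda$; this would place $\lambda\in\rho_{1}(T)$, contradicting $\sigma_{1}(T)=\sigma(T)$. (Here the $f$-isoloid/$f$-a-isoloid hypothesis is what one uses to rule out the possibility of isolated eigenvalues of infinite multiplicity spoiling these punctured-neighborhood arguments.)

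For the ($\Rightarrow$) direction, assume (1)--(3) hold plus the isoloid/Weyl hypotheses. Connectedness of $\sigma(T)\cup\partial D$: from (2), $\sigma(T)=\sigma_{b}(T)\supseteq\sigma_{w}(T)$, and since $\sigma_{w}(T)\subseteq\sigma(T)=\sigma_b(T)$ we actually get $\sigma_{w}(T)=\sigma(T)$ whenever Browder's theorem holds — but Weyl $\Rightarrow$ Browder, so $\sigma_w(T)=\sigma_b(T)$, combined with (2) giving $\sigma(T)=\sigma_w(T)$; then (1) says $\sigma(T)\cup\partial D=\sigma_w(T)\cup\partial D$ is connected. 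The main work is $\sigma(T)=\sigma_{1}(T)$: the inclusion $\sigma_{1}(T)\subseteq\sigma(T)$ is automatic, so I must show $\sigma(T)\subseteq\sigma_{1}(T)$, i.e. $\rho_{1}(T)\subseteq\rho(T)$. Take $\lambda\in\rho_{1}(T)$: then in a punctured disc around $\lambda$, $T-\mu I$ is generalized upper semi-Fredholm, in particular $\mu\notin\sigma_{ea}(T)$ for $0<|\mu-\lambda|<\epsilon$, so $\lambda$ is at worst an isolated point of $\sigma_{a}(T)$ (this uses $\sigma_1\subseteq\sigma_{ea}$). If $\lambda\notin\sigma_a(T)$ we still must push to $\lambda\notin\sigma(T)$; if $\lambda\in\mathrm{iso}\,\sigma_a(T)$, then by the $f$-a-isoloid hypothesis (resp. $f$-isoloid, after reducing to $\sigma$) $0<\dim N(T-\lambda I)<\infty$, so $\lambda\in\pi_{00}^{a}(T)$, and a-Weyl's theorem gives $\lambda\in\sigma_a(T)\setminus\sigma_{ea}(T)$; but then $\lambda\in\sigma_a(T)\setminus\sigma_{ab}(T)=p_{00}^{a}(T)$ by a-Browder (which holds since $T$ has... or rather, a-Weyl $\Rightarrow$ a-Browder), so $\lambda\in p_{00}(T)$, whence $T-\lambda I$ is Browder and by (3)/the index condition actually invertible — contradiction with $\lambda\in\sigma(T)$ unless $\lambda\notin\sigma(T)$. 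The upshot is that $\rho_1(T)\cap\sigma(T)$ would consist only of Riesz points that are forced by (2) to be empty. I expect the \textbf{main obstacle} to be handling the gap between $\sigma_a$ and $\sigma$ cleanly in the non-a-Weyl case — i.e. knowing that a generalized-upper-semi-Fredholm punctured neighborhood plus $f$-isoloid forces actual invertibility, not merely approximate-invertibility — and the honest way through is to invoke SVEP: by \cite[Lemma 2.18]{d} both $T$ and $T^\ast$ satisfy a-Browder's theorem for the operators in play (the excerpt flags exactly this for $T\in CH$), and SVEP at $\lambda$ on both sides upgrades the semi-Fredholm-of-index-$\leq0$ conclusion to ascent $=$ descent $=$ finite, i.e. to $\lambda$ being a Riesz point, which (2) then excludes.
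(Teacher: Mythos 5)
Your ($\Leftarrow$) direction is essentially the paper's: from $\sigma_{1}(T)\subseteq\sigma_{ea}(T)\subseteq\sigma_{w}(T)\subseteq\sigma(T)$ you get $\sigma_{w}(T)=\sigma(T)$, hence condition (1) of Lemma \ref{B2}; emptiness of $\sigma(T)\setminus\sigma_{b}(T)$ and the index condition follow as in the paper (note the paper gets (2) from iso$\,\sigma(T)=\emptyset$, and your parenthetical claim that the $f$-isoloid hypothesis is needed here is off -- that hypothesis plays no role in this direction). You also make explicit, correctly, the connectedness step in the forward direction that the paper leaves implicit.

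The genuine gap is in your ($\Rightarrow$) argument that $\rho_{1}(T)\subseteq\rho(T)$. The step ``$T-\mu I$ generalized upper semi-Fredholm, in particular $\mu\notin\sigma_{ea}(T)$'' is false: a generalized upper semi-Fredholm operator is a direct sum of an $SF_{+}^{-}$ part and a quasinilpotent part, and the quasinilpotent summand can have infinite-dimensional kernel or non-closed range, so the operator itself need not be upper semi-Fredholm at all. What is true (and what the paper uses) is only the perturbation property: for each such $\mu$ there is a \emph{second} punctured disc on which $T-\mu' I\in SF_{+}^{-}(\mathcal{H})$. The paper then runs a two-level argument entirely inside $\sigma(T)$: Lemma \ref{B2}(3) gives ind$(T-\mu' I)\geq 0$, hence $T-\mu' I$ is Weyl; Weyl's theorem (hence Browder's theorem) makes it Browder; and Lemma \ref{B2}(2) forces it to be invertible. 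Thus every $\mu$ in the first punctured disc is isolated in $\sigma(T)$ or in $\rho(T)$, and the $f$-isoloid $+$ Weyl $+$ Lemma \ref{B2}(2) argument excludes the isolated case, first for $\mu$ and then for $\lambda_{0}$ itself. Your route instead detours through $\sigma_{ea}$ and $\sigma_{a}$, and even granting the false step, being outside $\sigma_{ea}$ on a punctured disc does not make $\lambda$ isolated in $\sigma_{a}(T)$ (points of $SF_{+}^{-}$ with nontrivial finite-dimensional kernel stay in $\sigma_{a}$); you then acknowledge the $\sigma_{a}$-versus-$\sigma$ gap and propose to close it with SVEP/a-Browder, but that fix is not available: SVEP of $T^{\ast}$ comes from $\sigma_{p}(T^{\ast})=\emptyset$ for $T\in CH(\mathcal{H})$ (Theorem \ref{B1}), whereas the present theorem concerns the norm closure $\overline{CH(\mathcal{H})}$, where only the three spectral conditions of Lemma \ref{B2} are at your disposal. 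A smaller slip: the invertibility of $T-\lambda I$ at a putative Riesz point comes from Lemma \ref{B2}(2), not from the index condition (3), though you do invoke (2) correctly in your closing sentence.
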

\begin{proof}  Suppose $T\in \overline{CH(\mathcal{H})}.$ Let $\lambda_{0}\not\in \sigma_{1}(T).$ Then there exists $\epsilon>0$ such that $T - \lambda I$  is generalized
upper semi-Fredholm. For every $\lambda$, there exists  $\epsilon^{'}$ such that $T - \lambda^{'} I \in SF_{+}^{-}(\mathcal{H})$ and  $N(T- \lambda^{'} I) \subseteq \bigcap_{n=1}^{\infty}R[(T- \lambda^{'} I)^{n}]$   if  $0<|\lambda^{'}- \lambda|<\epsilon^{'}.$  Since $T\in \overline{CH(\mathcal{H})},$  it induces that ind$(T- \lambda I)\geq 0$  by Lemma \ref{B2}(3). Then $T- \lambda^{'} I$  is Weyl if $0<|\lambda^{'}- \lambda|<\epsilon.$ Since the Weyl's
theorem holds for $T$, then $T - \lambda^{'} I$  is Browder and hence $T - \lambda^{'} I$ is invertible if $0<|\lambda^{'}- \lambda|<\epsilon.$ It implies $\lambda \in$ iso$\sigma(T)\cup\rho(T),$ where $\rho(T)= \mathbb{C}\backslash \sigma(T).$ We claim that $\lambda \not\in$ iso$\sigma(T).$  If not, since
$T$ is finite isoloid and theWeyl's theorem holds for $T,$  it follows that $\lambda \in \pi_{00}= \sigma(T)\backslash \sigma_{w}(T).$ Then  $T - \lambda I$ is Browder. It is in contradiction to the fact that $T\in \overline{CH(\mathcal{H})}$ by Lemma \ref{B2}(2). Thus $\lambda \not\in\sigma(T).$ It induces that $\lambda_{0} \in$ iso$\sigma(T)\cup\rho(T).$ Using the same way, we prove that $T- \lambda_{0} I$ is invertible, which means that $\lambda \not\in\sigma(T).$\\

Conversely, suppose that $\sigma(T) = \sigma_{1}(T)$ and $\sigma(T)\cup  \partial D$ is connected. Since  $\sigma_{w}(T)= \sigma(T),$ it follows that $\sigma_{w}(T)\cup  \partial D$ is connected. Using the fact that  iso$\sigma(T)\cap \sigma_{1}(T)= \emptyset$ and $\sigma(T)=\sigma_{1}(T),$ we know that iso$\sigma(T)= \emptyset.$ Thus  $\sigma(T)\backslash\sigma_{b}(T)= \emptyset.$ If there exists $\lambda \in \rho_{SF}(T)$ such that ind$(T-\lambda I)<0,$ then $\lambda \not\in\sigma_{1}(T)$ hence  $\lambda \not\in\sigma(T),$  which means that $T-\lambda I$  is invertible.
It is in contradiction to the fact that ind$(T-\lambda I)<0.$ Hence for any $\lambda \in \rho_{SF}(T),$ ind$(T-\lambda I)\geq0.$ Using Lemma \ref{B2}, $T\in \overline{CH(\mathcal{H})}.$
\end{proof}
\begin{corollary} Suppose $T\in \overline{CH(\mathcal{H})}$ and  the a-Weyl's theorem holds for $T$. Then  a-Weyl's theorem holds for $f(T)$ for any $f\in H(T).$
\end{corollary}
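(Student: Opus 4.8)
The plan is to combine the previous theorem (the characterization of $\overline{CH(\mathcal{H})}$ via the equalities $\sigma(T)=\sigma_1(T)=\sigma_w(T)=\sigma_b(T)$ and connectedness of $\sigma(T)\cup\partial D$) with the known permanence properties of a-Weyl's theorem under the functional calculus. First I would observe that the hypotheses $T\in\overline{CH(\mathcal{H})}$ and a-Weyl's theorem for $T$ put us in the situation of the preceding theorem, so that $\sigma(T)=\sigma_1(T)$; in particular $\operatorname{iso}\sigma(T)=\emptyset$, $\sigma(T)=\sigma_w(T)=\sigma_b(T)$, and (since $\sigma_1\subseteq\sigma_{ea}\subseteq\sigma_{ab}\subseteq\sigma_a\subseteq\sigma$) also $\sigma_a(T)=\sigma_{ea}(T)=\sigma_{ab}(T)=\sigma(T)$. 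The point is that $T$ has no isolated points in its approximate point spectrum, so $\pi_{00}^a(T)=\emptyset$ and a-Weyl's theorem for $T$ reduces to the trivial identity $\sigma_a(T)\setminus\sigma_{ea}(T)=\emptyset=\pi_{00}^a(T)$.

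Next I would transfer this picture to $f(T)$ for $f\in H(T)$. By the spectral mapping theorem $\sigma(f(T))=f(\sigma(T))$, and because $f$ is non-constant on each component, $f$ is an open map on a neighborhood of $\sigma(T)$, so $f$ cannot create isolated points: $\operatorname{iso}\sigma(f(T))=\emptyset$. The spectral mapping theorem also holds for the essential approximate point spectrum $\sigma_{ea}$ and for $\sigma_{ab}$ for operators with the relevant one-sided SVEP; here, since $T\in CH$-closure, both $T$ and $T^*$ satisfy a-Browder's theorem (as noted in the introduction), which is exactly what licenses $\sigma_{ea}(f(T))=f(\sigma_{ea}(T))$ and $\sigma_{ab}(f(T))=f(\sigma_{ab}(T))$. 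Combining these, $\sigma_a(f(T))=\sigma_{ea}(f(T))=f(\sigma(T))=\sigma(f(T))$, and $\pi_{00}^a(f(T))=\emptyset$ because there are no isolated points; hence $\sigma_a(f(T))\setminus\sigma_{ea}(f(T))=\emptyset=\pi_{00}^a(f(T))$, which is a-Weyl's theorem for $f(T)$.

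The main obstacle I anticipate is justifying the spectral mapping property for $\sigma_{ea}$ (and $\sigma_{ab}$) and, slightly more delicately, verifying that $\pi_{00}^a(f(T))$ really is empty — one must rule out the possibility that $f$ identifies several points of $\sigma_a(T)$ in a way that produces an isolated eigenvalue of finite multiplicity in $\sigma_a(f(T))$; this is handled by the openness of $f$ near $\sigma(T)$, which forces $f(\sigma(T))$ to have empty interior-free... rather, forces $\sigma(f(T))$ to again be a perfect set with connected-in-the-right-sense structure, so no isolated points appear. A clean way to organize the argument is: (i) invoke the previous theorem to get $\sigma(T)=\sigma_1(T)$ and derive the chain of spectral equalities and $\operatorname{iso}\sigma_a(T)=\emptyset$; (ii) use a-Browder's theorem for $T$ and $T^*$ (from \cite[Lemma 2.18]{d}) to get $\sigma_{ea}(f(T))=f(\sigma_{ea}(T))$; (iii) use that $f\in H(T)$ is non-constant on components, hence open, to conclude $\operatorname{iso}\sigma_a(f(T))=\emptyset$ and $\pi_{00}^a(f(T))=\emptyset$; (iv) assemble $\sigma_a(f(T))\setminus\sigma_{ea}(f(T))=\emptyset=\pi_{00}^a(f(T))$. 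One should be careful that $H(T)$ as defined requires non-constancy on components (not just analyticity), which is precisely the hypothesis making step (iii) work, so I would flag explicitly where that assumption is used.
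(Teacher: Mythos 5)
There is a genuine gap, and it sits right at your step (i). The preceding theorem's equivalence ``$T\in \overline{CH(\mathcal{H})} \Leftrightarrow \sigma(T)=\sigma_{1}(T)$ and $\sigma(T)\cup\partial D$ connected'' is proved under the extra hypothesis that $T$ is $f$-isoloid (resp. $f$-a-isoloid); the corollary does \emph{not} assume any isoloid condition, so you are not ``in the situation of the preceding theorem'' and cannot conclude $\sigma(T)=\sigma_{1}(T)$, iso$\,\sigma(T)=\emptyset$, or $\pi_{00}^{a}(T)=\emptyset$. These conclusions are in fact false in general under the corollary's hypotheses: take $T=(\lambda_{0}+Q)\oplus S$ with $\lambda_{0}\in\partial D$, $Q$ an injective compact quasinilpotent, and $S$ normal with $\sigma(S)=\sigma_{e}(S)$ a region touching $\partial D$ but avoiding a neighbourhood of $\lambda_{0}$. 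One checks the three conditions of Lemma \ref{B2}, so $T\in\overline{CH(\mathcal{H})}$, and a-Weyl's theorem holds for $T$ (both sides of the identity are empty, since $N(T-\lambda_{0}I)=\{0\}$), yet $\lambda_{0}\in$ iso$\,\sigma(T)=$ iso$\,\sigma_{a}(T)\neq\emptyset$. So the whole strategy of reducing a-Weyl's theorem for $f(T)$ to ``there are no isolated points, hence $\pi_{00}^{a}(f(T))=\emptyset$'' collapses. A second, independent problem is step (ii): the spectral mapping equality $\sigma_{ea}(f(T))=f(\sigma_{ea}(T))$ is not a consequence of a-Browder's theorem for $T$ and $T^{\ast}$; in general only the inclusion $\sigma_{ea}(f(T))\subseteq f(\sigma_{ea}(T))$ holds, and what restores equality is precisely an index condition on the upper semi-Fredholm domain.

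That index condition is the whole point of the paper's (one-line) proof, which you should compare with: since $T\in\overline{CH(\mathcal{H})}$, Lemma \ref{B2}(3) gives ind$(T-\lambda I)\geq 0$ for every $\lambda\in\rho_{SF}(T)$, hence ind$(T-\lambda I)\,$ind$(T-\mu I)\geq 0$ for every pair $\lambda,\mu\in\mathbb{C}\backslash\sigma_{SF_{+}}(T)$; Theorem 2.2 of \cite{hd} then says exactly that an operator satisfying a-Weyl's theorem together with this index product condition satisfies a-Weyl's theorem for $f(T)$ for all $f\in H(T)$ (this is where the non-constancy of $f$ on components is consumed, not in any openness argument about isolated points). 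If you want a self-contained argument rather than the citation, the work you must do is the content of that theorem: prove the spectral mapping property for $\sigma_{ea}$ from the index condition and then track $\pi_{00}^{a}(f(T))$ back to $\pi_{00}^{a}(T)$, allowing for nonempty $\pi_{00}^{a}$; assuming it is empty is not available here.
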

\begin{proof} Since $T\in \overline{CH(\mathcal{H})},$ it induces that for each pair $\lambda, \mu \in \mathbb{C}\backslash\sigma_{SF_{+}}(T),$ ind$(T-\lambda I)$ind$(T-\mu I)\geq0.$ Theorem 2.2 in \cite{hd} tells us that the a-Weyl's theorem holds for $f(T)$ for any $f\in H(T).$
\end{proof}

\begin{theorem}\label{B1} If  $T\in CH(\mathcal{H}),$  then  $T$ and $T^{\ast}$ satisfy a-Browder's theorem.
\end{theorem}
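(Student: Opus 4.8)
The plan is to reduce both assertions to the fact (\cite[Lemma 2.18]{d}) that an operator with SVEP satisfies a-Browder's theorem; the whole point is to extract SVEP of $T^{\ast}$ from Ces\`aro-hypercyclicity. First I would show that Ces\`aro-hypercyclicity of $T$ forces $N(T^{\ast}-\mu I)=\{0\}$ for every $\mu\in\mathbb{C}$, which gives SVEP of $T^{\ast}$; this settles the $T^{\ast}$ part at once, and the Browder's theorem it entails for $T^{\ast}$ (hence for $T$) combines with Lemma \ref{B2}(3) to yield a-Browder's theorem for $T$.

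To see that $T^{\ast}$ has no eigenvalue, let $x$ be such that $\{n^{-1}T^{n}x\}_{n\geq1}$ is dense in $\mathcal{H}$ and suppose $T^{\ast}y=\mu y$ with $y\neq 0$. The orbit cannot lie in the proper closed subspace $y^{\perp}$, so $c:=\langle x,y\rangle\neq 0$, and $\langle n^{-1}T^{n}x,y\rangle=n^{-1}\langle x,(T^{\ast})^{n}y\rangle=n^{-1}\bar{\mu}^{\,n}c$ for all $n\geq 1$. Since $z\mapsto\langle z,y\rangle$ is a continuous surjection of $\mathcal{H}$ onto $\mathbb{C}$, density of the orbit would force $\{n^{-1}\bar{\mu}^{\,n}c:n\geq 1\}$ to be dense in $\mathbb{C}$; but that set is bounded if $|\mu|\leq 1$ and has moduli tending to $\infty$ if $|\mu|>1$, so it is never dense -- a contradiction. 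Hence $T^{\ast}$ has no eigenvalue, and therefore no non-zero analytic function $f$ on a disc can satisfy $(T^{\ast}-\lambda I)f(\lambda)\equiv 0$ (a point with $f(\lambda)\neq 0$ would produce an eigenvalue of $T^{\ast}$); that is, $T^{\ast}$ has SVEP, so by \cite[Lemma 2.18]{d} it satisfies a-Browder's theorem.

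For $T$ itself, note that a-Browder's theorem for $T^{\ast}$ implies Browder's theorem for $T^{\ast}$, hence for $T$ (Browder's theorem is invariant under the adjoint, since $\sigma_{w}$ and $\sigma_{b}$ are), i.e.\ $\sigma_{w}(T)=\sigma_{b}(T)$. On the other hand $T\in CH(\mathcal{H})\subseteq\overline{CH(\mathcal{H})}$, so Lemma \ref{B2}(3) gives $\operatorname{ind}(T-\lambda I)\geq 0$ for every $\lambda\in\rho_{SF}(T)$; thus if $\lambda\notin\sigma_{ea}(T)$, then $T-\lambda I$ is upper semi-Fredholm with index both $\leq 0$ and $\geq 0$, i.e.\ Weyl, so $\lambda\notin\sigma_{w}(T)$. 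Hence $\sigma_{w}(T)\subseteq\sigma_{ea}(T)$, and since the reverse inclusion is automatic, $\sigma_{ea}(T)=\sigma_{w}(T)=\sigma_{b}(T)$. As $\sigma_{ea}(T)\subseteq\sigma_{ab}(T)\subseteq\sigma_{b}(T)$ always holds, we conclude $\sigma_{ea}(T)=\sigma_{ab}(T)$, i.e.\ $T$ satisfies a-Browder's theorem.

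The hypercyclicity-type estimate is routine; the step I expect to need the most care is the passage from the adjoint to $T$ itself on the \emph{approximate}-point side. SVEP of $T^{\ast}$ by itself yields only Browder's theorem for $T$, not a-Browder's theorem, so it is essential to feed in the index inequality of Lemma \ref{B2}(3) to push ``upper semi-Fredholm of index $\leq 0$'' up to ``Weyl''. One should also keep in mind that SVEP is not stable under norm limits, so the argument genuinely uses $T\in CH(\mathcal{H})$ and not merely $T\in\overline{CH(\mathcal{H})}$.
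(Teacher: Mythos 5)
Your proof is correct, and it splits into two halves that relate to the paper's argument differently. The first half is the same in spirit: like the paper, you get SVEP of $T^{\ast}$ from $\sigma_{p}(T^{\ast})=\emptyset$ and then invoke \cite[Lemma 2.18]{d}; the difference is that the paper simply asserts $\sigma_{p}(T^{\ast})=\emptyset$ for $T\in CH(\mathcal{H})$, whereas you actually prove it with the functional estimate on $\{n^{-1}\bar{\mu}^{n}\langle x,y\rangle\}$ (bounded when $|\mu|\leq 1$, moduli tending to $\infty$ when $|\mu|>1$), which is a genuine and welcome addition since the Ces\`aro orbit $n^{-1}T^{n}x$ requires exactly this growth comparison rather than the classical hypercyclic one. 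The second half is where you diverge: the paper stays with local spectral theory, arguing that for $\lambda\notin\sigma_{ea}(T)$ the SVEP of $T^{\ast}$ forces $\mathrm{dsc}(T-\lambda I)<\infty$ \cite[Theorem 3.17]{a}, hence $\mathrm{ind}(T-\lambda I)\geq 0$, so $T-\lambda I$ is Fredholm with equal finite ascent and descent \cite[Theorem 3.4]{a} and $\lambda\notin\sigma_{ab}(T)$; you instead get $\mathrm{ind}(T-\lambda I)\geq 0$ from the index condition in Lemma \ref{B2}(3) (via $CH(\mathcal{H})\subseteq\overline{CH(\mathcal{H})}$), conclude $\sigma_{ea}(T)=\sigma_{w}(T)$, transfer Browder's theorem from $T^{\ast}$ to $T$ to get $\sigma_{w}(T)=\sigma_{b}(T)$, and finish with the standard chain $\sigma_{ea}(T)\subseteq\sigma_{ab}(T)\subseteq\sigma_{b}(T)$. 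Both routes are sound: the paper's is self-contained at the level of ascent/descent and does not need the spectral characterization of $\overline{CH(\mathcal{H})}$ from \cite{l}, while yours trades that local-spectral bookkeeping for the index inequality of Lemma \ref{B2}(3) plus elementary spectral inclusions, at the (mild) cost of invoking the unproved-but-standard facts that Browder's theorem passes between $T$ and $T^{\ast}$ and that $\sigma_{ab}(T)\subseteq\sigma_{b}(T)$. Your closing caveat that the argument needs $T\in CH(\mathcal{H})$ itself (for $\sigma_{p}(T^{\ast})=\emptyset$) and not merely $T\in\overline{CH(\mathcal{H})}$ is well taken.
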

\begin{proof} Since $T\in CH(\mathcal{H}),$ then $\sigma_{p}(T^{\ast})= \emptyset,$ it follows that $T^{\ast}$ has SVEP. Recall from \cite[Lemma 2.18]{d} that a
(necessary and) sufficient condition for an operator $T$  to satisfy a-Browder's theorem is that $T$ has SVEP at points $\lambda\not\in \sigma_{ea}(T);$ hence $T^{\ast} $ satisfies a-Browder's theorem. The following
argument shows $T$  also satisfies a-Browder's theorem. Evidently, $\sigma_{ea}(T)\subseteq \sigma_{ab}(T).$ Thus to
prove that $T$ satisfies a-Browder's theorem it would suffice to prove that $\sigma_{ab}(T)\subseteq \sigma_{ea}(T).$ Let  $\lambda\not\in \sigma_{ea}(T).$ Then $T-\lambda I$  is upper semi-Fredholm  and  ind$(T- \lambda I)\leq0.$ Since $T^{\ast}$ has SVEP, dsc$(T-\lambda I)<\infty$  \cite[Theorem 3.17 ]{a} $\Rightarrow$ ind$(T- \lambda I)\geq0.$ Thus ind$(T- \lambda I)=0$ and  $T- \lambda I$  is Fredholm. But then,
since  dsc$(T-\lambda I)<\infty,$  asc$(T-\lambda I)=$ dsc$(T-\lambda I)<\infty$
\cite[Theorem 3.4 ]{a}, which implies that $\lambda\not\in \sigma_{ab}(T).$
\end{proof}

The following example gives us an operator which satisfies a-Browder's theorem but
not Ces$\grave{a}$ro-hypercyclic.
\begin{example}  Let $T$  be defined by
\begin{center}
$T(\frac{x_{0}}{2}, \frac{x_{1}}{3}, \frac{x_{2}}{4}, ...)$  for all $(x_{n})\in l^{2}(\mathbb{N}).$
\end{center}
Then  $T$ is quasi-nilpotent, so has SVEP and consequently satisfies a-Browder's theorem. On the other hand, by  Proposition \ref{A2} the operator $T$  is not Ces$\grave{a}$ro-hypercyclic.
\end{example}

\begin{theorem} If $T\in CH(\mathcal{H}),$ then $T^{\ast}$ satisfies Weyl's theorem. If also
$\pi_{00}(T) \subseteq \pi_{00}(T^{\ast}),$ then $T$ satisfies a-Weyl's theorem.
\end{theorem}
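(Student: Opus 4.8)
The plan is to exploit the single-valued extension property of $T^{\ast}$, which holds because $T \in CH(\mathcal{H})$ forces $\sigma_{p}(T^{\ast}) = \emptyset$ (if $T^{\ast}x = \lambda x$ with $x \neq 0$, then $x$ is orthogonal to every $n^{-1}T^{n}y$, contradicting denseness of such orbits). First I would record the consequences already available: by Theorem \ref{B1} both $T$ and $T^{\ast}$ satisfy a-Browder's theorem, hence also Browder's theorem; in particular $\sigma_{w}(T^{\ast}) = \sigma_{b}(T^{\ast})$, so $\sigma(T^{\ast}) \setminus \sigma_{w}(T^{\ast}) = p_{00}(T^{\ast})$. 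To upgrade this to Weyl's theorem for $T^{\ast}$ it suffices to show $\pi_{00}(T^{\ast}) \subseteq p_{00}(T^{\ast})$, i.e. that every isolated eigenvalue of finite multiplicity of $T^{\ast}$ is a Riesz point.

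So let $\lambda \in \pi_{00}(T^{\ast})$. Then $\lambda \in \mathrm{iso}\,\sigma(T^{\ast}) = \mathrm{iso}\,\sigma(T)$ (taking complex conjugates throughout; recall $\sigma(T^{\ast}) = \overline{\sigma(T)}$, so I would phrase everything in terms of $\bar\lambda$ as an isolated point of $\sigma(T)$). Since $\bar\lambda$ is an isolated point of $\sigma(T)$, the operator $T - \bar\lambda I$ is generalized upper semi-Fredholm, as noted in the introduction. I would then use the Riesz decomposition at the isolated point: $\mathcal{H}$ splits into $T$-invariant $H_{0} \oplus H_{1}$ with $\sigma(T|_{H_{0}}) = \{\bar\lambda\}$ and $\sigma(T|_{H_{1}}) = \sigma(T) \setminus \{\bar\lambda\}$. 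Because $T^{\ast}$ has SVEP, $T$ has SVEP at every point except possibly those where both are relevant; more to the point, applying a-Browder's theorem for $T^{\ast}$ at $\lambda$ together with $\lambda \in \pi_{00}(T^{\ast})$ gives that $T^{\ast} - \lambda I$ is upper semi-Fredholm with $\mathrm{ind} \le 0$, and SVEP of $T^{\ast}$ at $\lambda$ then forces $\mathrm{ind}(T^{\ast} - \lambda I) \ge 0$ via \cite[Theorem 3.17]{a}, exactly as in the proof of Theorem \ref{B1}. Hence $T^{\ast} - \lambda I$ is Fredholm of index zero with finite descent, so finite ascent and descent, i.e. $\lambda \in p_{00}(T^{\ast})$. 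This yields $\pi_{00}(T^{\ast}) = p_{00}(T^{\ast})$ and, combined with $\sigma(T^{\ast}) \setminus \sigma_{w}(T^{\ast}) = p_{00}(T^{\ast})$, proves Weyl's theorem for $T^{\ast}$.

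For the second assertion, assume in addition $\pi_{00}(T) \subseteq \pi_{00}(T^{\ast})$. Since $T$ satisfies a-Browder's theorem by Theorem \ref{B1}, one has $\sigma_{a}(T) \setminus \sigma_{ea}(T) = p_{00}^{a}(T)$, so a-Weyl's theorem for $T$ amounts to showing $\pi_{00}^{a}(T) \subseteq p_{00}^{a}(T)$. Take $\lambda \in \pi_{00}^{a}(T)$: then $\lambda \in \mathrm{iso}\,\sigma_{a}(T)$ with $0 < \dim N(T - \lambda I) < \infty$. The first thing to check is that $\lambda \in \mathrm{iso}\,\sigma(T)$; here I would use that $T^{\ast}$ has SVEP, which implies $\sigma_{a}(T)$ and $\sigma(T)$ have the same boundary behaviour in the sense that isolated points of $\sigma_{a}(T)$ are isolated in $\sigma(T)$ (SVEP of the adjoint rules out interior-of-$\sigma$ phenomena). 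Then $\lambda \in \pi_{00}(T)$, so by hypothesis $\lambda \in \pi_{00}(T^{\ast})$, hence by the first part $\lambda \in p_{00}(T^{\ast}) = p_{00}(T)$, and in particular $T - \lambda I$ is Browder, giving $\lambda \in p_{00}^{a}(T)$. This closes the argument. The main obstacle I anticipate is the passage $\mathrm{iso}\,\sigma_{a}(T) \Rightarrow \mathrm{iso}\,\sigma(T)$ and the careful bookkeeping of index inequalities under SVEP of the adjoint; everything else is an assembly of Theorem \ref{B1}, a-Browder's theorem, and the standard Fredholm-theoretic facts cited from \cite{a}.
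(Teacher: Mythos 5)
Your overall architecture parallels the paper's (get SVEP of $T^{\ast}$ from $\sigma_{p}(T^{\ast})=\emptyset$, invoke Theorem \ref{B1} for a-Browder's theorem, then reduce Weyl's theorem for $T^{\ast}$ to $\pi_{00}(T^{\ast})\subseteq p_{00}(T^{\ast})$ and a-Weyl's theorem for $T$ to $\pi_{00}^{a}(T)\subseteq p_{00}^{a}(T)$), but the step you use to prove the first inclusion is not valid. From $\lambda\in\pi_{00}(T^{\ast})$ and a-Browder's theorem you cannot conclude that $T^{\ast}-\lambda I$ is upper semi-Fredholm: an isolated point of the spectrum with finite-dimensional kernel need not have closed range, and the inclusion $\pi_{00}\subseteq p_{00}$ is false in general -- it is precisely the gap separating Browder's theorem from Weyl's theorem. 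For instance, $V\oplus 0$ with $V$ the Volterra operator and $0$ acting on a one-dimensional space (or a compact quasinilpotent backward weighted shift) has SVEP, hence satisfies a-Browder's theorem, and $0\in\pi_{00}$, yet $0\notin p_{00}$ because the range is not closed. Your index bookkeeping is also reversed: SVEP of $T^{\ast}$ at $\lambda$ plus semi-Fredholmness yields finite \emph{ascent} of $T^{\ast}-\lambda I$, i.e. $\mathrm{ind}(T^{\ast}-\lambda I)\leq 0$; the inequality $\mathrm{ind}(T^{\ast}-\lambda I)\geq 0$ would need SVEP of $T$, which a Ces$\grave{a}$ro-hypercyclic operator need not have (at an isolated spectral point both inequalities are indeed available, but only after closedness of the range is secured, which is exactly the missing ingredient).

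The repair is one line, and it is the paper's argument, using a fact you state but never exploit: $\sigma_{p}(T^{\ast})=\emptyset$ means $T^{\ast}$ has no eigenvalues at all, so $\pi_{00}(T^{\ast})=\emptyset=p_{00}(T^{\ast})$, and Browder's theorem for $T^{\ast}$ (from Theorem \ref{B1}) immediately gives Weyl's theorem for $T^{\ast}$. Likewise the hypothesis $\pi_{00}(T)\subseteq\pi_{00}(T^{\ast})$ forces $\pi_{00}(T)=\emptyset$, and since SVEP of $T^{\ast}$ gives $\sigma_{a}(T)=\sigma(T)$ (this is the clean form of your ``isolated in $\sigma_{a}$ implies isolated in $\sigma$'' step), also $\pi_{00}^{a}(T)=\pi_{00}(T)=\emptyset$; your reduction of a-Weyl's theorem to a-Browder's theorem plus $\pi_{00}^{a}(T)\subseteq p_{00}^{a}(T)$ is correct and then closes instantly. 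That reduction is in fact a slightly shorter route to the second assertion than the paper's, which instead proves $\sigma_{ea}(T)=\sigma_{w}(T)$ by an index argument and deduces a-Weyl's theorem from Weyl's theorem for $T$; but as written your proof of the crucial inclusions rests on the invalid semi-Fredholm step rather than on the emptiness of $\pi_{00}(T^{\ast})$ and $\pi_{00}(T)$.
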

\begin{proof} Evidently, if $T\in CH(\mathcal{H}),$ then  $p_{00}(T)= p_{00}(T^{\ast})=\pi_{00}(T^{\ast})= \emptyset.$ Since $T^{\ast}$  satisfies Browder's theorem, it follows that $T^{\ast}$ satisfies Weyl's theorem.\\
Since $p_{00}(T)\subseteq \pi_{00}(T)$ for every operator $T,$  and since operators $T\in CH,$ satisfy Browder's theorem, we have that $\sigma(T)\backslash \sigma_{w}(T)= p_{00}(T)\subseteq \pi_{00}(T).$ Hence, if  $\pi_{00}(T)\subseteq\pi_{00}(T^{\ast}),$ then  $\sigma(T)\backslash \sigma_{w}(T)= p_{00}(T)\subseteq \pi_{00}(T)\subseteq\pi_{00}(T^{\ast})= p_{00}(T^{\ast})= p_{00}(T),$ i.e., $T$ satisfies Weyl's
theorem. To complete the proof, we prove now that $T$ satisfies a-Weyl's theorem.\\
Observe that if $T^{\ast}$ has SVEP, then $\sigma(T)= \sigma_{a}(T)$ and $\pi_{00}(T)= \pi_{00}^{a}(T).$ Let $\lambda \not\in \sigma_{ea}(T).$  then $T- \lambda I$  is upper semi-Fredholm and ind$(T- \lambda I)\leq0.$ Arguing as in the proof of Theorem \ref{B1}, it is  seen that $T- \lambda I$  is Fredholm and ind$(T- \lambda I)=0,$ i.e., $\lambda \not\in \sigma_{w}(T).$  Since $\sigma_{w}(T)\supseteq \sigma_{ea}(T)$ for every
operator $T,$  we conclude that  $\sigma_{w}(T)= \sigma_{ea}(T).$ But then, since $T$ satisfies Weyl's theorem, $\sigma_{a}(T)\backslash \sigma_{ea}(T)= \sigma(T)\backslash \sigma_{w}(T)= \pi_{00}(T)= \pi_{00}^{a}(T).$
\end{proof}
\begin{corollary}  $T\in CH(\mathcal{H})$ satisfies a-Weyl's theorem if and only if $\pi_{00}(T)=\emptyset.$
\end{corollary}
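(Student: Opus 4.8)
The plan is to observe that, for an operator $T\in CH(\mathcal{H})$, the left-hand side of the equality defining a-Weyl's theorem is forced to be empty; consequently a-Weyl's theorem for such a $T$ is equivalent to the vanishing of $\pi_{00}^{a}(T)$, which for $T\in CH(\mathcal{H})$ coincides with $\pi_{00}(T)$. So the corollary should follow by reassembling the spectral identities already available on $CH(\mathcal{H})$, with no new work.

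First I would collect the ingredients established (and used) in the proof of the preceding theorem. Since $T\in CH(\mathcal{H})$, the operator $T^{\ast}$ has no point spectrum, hence has SVEP, and therefore $\sigma_{a}(T)=\sigma(T)$ and $\pi_{00}^{a}(T)=\pi_{00}(T)$. By Theorem \ref{B1}, $T$ satisfies a-Browder's theorem and hence Browder's theorem, so $\sigma(T)\setminus\sigma_{w}(T)=p_{00}(T)$; and, as recorded in the proof of the preceding theorem, $T\in CH(\mathcal{H})$ gives $p_{00}(T)=p_{00}(T^{\ast})=\pi_{00}(T^{\ast})=\emptyset$. Finally, again because $T^{\ast}$ has SVEP, the argument of Theorem \ref{B1} shows that every $\lambda\notin\sigma_{ea}(T)$ satisfies $\lambda\notin\sigma_{w}(T)$, whence $\sigma_{w}(T)=\sigma_{ea}(T)$.

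Next I would assemble these facts. Combining $\sigma_{a}(T)=\sigma(T)$, $\sigma_{w}(T)=\sigma_{ea}(T)$, and $\sigma(T)\setminus\sigma_{w}(T)=p_{00}(T)=\emptyset$ yields
$$\sigma_{a}(T)\setminus\sigma_{ea}(T)=\sigma(T)\setminus\sigma_{w}(T)=\emptyset.$$
Hence $T$ satisfies a-Weyl's theorem, i.e. $\sigma_{a}(T)\setminus\sigma_{ea}(T)=\pi_{00}^{a}(T)$, if and only if $\pi_{00}^{a}(T)=\emptyset$, and since $\pi_{00}^{a}(T)=\pi_{00}(T)$ this is equivalent to $\pi_{00}(T)=\emptyset$. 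This proves both implications simultaneously.

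I do not expect a genuine obstacle: the statement is essentially bookkeeping on top of the identities valid throughout $CH(\mathcal{H})$. The one point that deserves care is checking that each of the four identities used ($\sigma_{a}(T)=\sigma(T)$, $\pi_{00}^{a}(T)=\pi_{00}(T)$, $p_{00}(T)=\emptyset$, and $\sigma_{w}(T)=\sigma_{ea}(T)$) is available from $T\in CH(\mathcal{H})$ alone, and not only under the extra hypothesis $\pi_{00}(T)\subseteq\pi_{00}(T^{\ast})$ that appeared in the preceding theorem; but all four rest solely on $T\in CH(\mathcal{H})$ together with the SVEP of $T^{\ast}$, so the reduction is legitimate.
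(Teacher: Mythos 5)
Your argument is correct and is exactly the intended one: the paper states this corollary without proof, as an immediate consequence of the preceding theorem and its proof, where all four identities you invoke ($\sigma_{a}(T)=\sigma(T)$, $\pi_{00}^{a}(T)=\pi_{00}(T)$, $p_{00}(T)=\emptyset$, $\sigma_{w}(T)=\sigma_{ea}(T)$) are established from $T\in CH(\mathcal{H})$ alone via the SVEP of $T^{\ast}$. Your explicit check that none of these facts depends on the extra hypothesis $\pi_{00}(T)\subseteq\pi_{00}(T^{\ast})$ is the right point of care, and with it the equivalence $\sigma_{a}(T)\setminus\sigma_{ea}(T)=\emptyset$, hence a-Weyl's theorem $\Leftrightarrow \pi_{00}(T)=\emptyset$, follows as you say.
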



\end{document}